\documentclass{article}%
\usepackage{amsfonts}
\usepackage{amsmath}
\usepackage{amssymb}
\usepackage{graphicx}%
\setcounter{MaxMatrixCols}{30}
\providecommand{\U}[1]{\protect\rule{.1in}{.1in}}
\newtheorem{theorem}{Theorem} [section]

\newtheorem{definition}[theorem]{Definition}

\newtheorem{proposition}[theorem]{Proposition}
\newtheorem{remark}[theorem]{Remark}

\newenvironment{proof}[1][Proof]{\noindent\textbf{#1.} }{\ \rule{0.5em}{0.5em}}
\setlength{\textwidth}{5.3in}
\begin{document}

\title{Generalization of the basis theorem for alternating groups}
\author{Robert Shwartz \\ Ariel University \\ ISRAEL \\robertsh@ariel.ac.il
\and Linoy Fanrazi \\ Ariel University \\ ISRAEL \\ linoy.fanrazi@msmail.ariel.ac.il
\and Sawsan Khazkeia \\ Ariel University \\ ISRAEL \\ sawsan.khazkeia@msmail.ariel.ac.il}
\date{}
\maketitle
\begin{abstract}
\noindent There were defined by R Shwartz OGS for non-abelian groups,
as an interesting generalization of the basis of finite abelian groups. The definition of OGS
states that that every element of a group has a unique presentation as a product of
some powers of the OGS, in a specific given order. In case of the symmetric groups $S_{n}$ there is a
paper of R. Shwartz, which demonstrates a strong
connection between the OGS and the standard Coxeter presentation of the symmetric group.
The OGS presentation helps us to find the Coxeter length and the descent set of an arbitrary
element of the symmetric group. Therefore, it motivates us to generalize the OGS for the
alternating subgroup of the symmetric group, which we define in this paper. We generalize also
the exchange laws for the alternating subgroup, and we will show some interesting properties
of it.
\end{abstract}

\section{Introduction}

The fundamental theorem of finitely generated abelian groups states the following:
Let $A$ be a finitely generated abelian group, then there exists generators $a_{1}, a_{2}, \ldots a_{n}$, such that every element $a$ in $A$ has a unique presentation of a form:
$$g=a_{1}^{i_{1}}\cdot a_{2}^{i_{2}}\cdots a_{n}^{i_{n}},$$
where, $i_{1}, i_{2}, \ldots, i_{n}$ are $n$ integers such that for  $1\leq k\leq n$, $0\leq i_{k}<|g_{k}|$, where $a_{k}$ has a finite order of $|a_{k}|$ in $A$, and $i_{k}\in \mathbb{Z}$, where $a_{k}$ has infinite order in $A$.
Where, the meaning of the theorem is that every abelian group $A$ is direct sum of finitely many cyclic subgroup $A_{i}$ (where $1\leq i\leq k$), for some $k\in \mathbb{N}$.

\begin{definition}\label{ogs}
Let $G$ be a non-abelian group. The ordered sequence of $n$ elements $\langle g_{1}, g_{2}, \ldots, g_{n}\rangle$ is called an $Ordered ~~ Generating ~~ System$ of the group $G$ or by shortened notation, $OGS(G)$, if every element $g\in G$ has a unique presentation in the a form
$$g=g_{1}^{i_{1}}\cdot g_{2}^{i_{2}}\cdots g_{n}^{i_{n}},$$
where, $i_{1}, i_{2}, \ldots, i_{n}$ are $n$ integers such that for  $1\leq k\leq n$, $0\leq i_{k}<r_{k}$, where  $r_{k} | |g_{k}|$  in case the order of $g_{k}$ is finite in $G$, or   $i_{k}\in \mathbb{Z}$, in case  $g_{k}$ has infinite order in $G$.
The mentioned canonical form is called $OGS$ canonical form.  For every $q>p$, $1\leq x_{q}<r_{q}$, and $1\leq x_{p}<r_{p}$ the relation
$$g_{q}^{x_{q}}\cdot g_{p}^{x_{p}} = g_{1}^{i_{1}}\cdot g_{2}^{i_{2}}\cdots g_{n}^{i_{n}},$$
is called exchange law.
\end{definition}
In contrast to finitely generated abelian groups, the existence of an $OGS$ is generally not true for every finitely generated non-abelian group. Even in case of two-generated infinite non-abelian groups it is not too hard to find counter examples. For example, the Baumslag-Solitar groups $BS(m,n)$ \cite{BS}, where $m\neq \pm1$ or $n\neq \pm1$, or most of the cases of the one-relator free product of a finite cyclic group generated by $a$, with a finite two-generated group generated by $b, c$ with the relation $a^{2}\cdot b\cdot a\cdot c=1$ \cite{S}, do not have an $OGS$. Even the question of the existence of an $OGS$ for a general finite non-abelian group is still open. Moreover, contrary to the abelian case where the exchange law is just $g_{q}\cdot g_{p}=g_{p}\cdot g_{q}$, in most of the cases of non-abelian groups with the existence of an $OGS$, the exchange laws are very complicated.  Although there are some specific non-abelian groups where the exchange laws are very convenient and have very interesting properties. A very good example of it is the symmetric group $S_{n}$. In 2001, Adin and Roichman \cite{AR} introduced a presentation of an $OGS$ canonical form  for the symmetric group $S_n$, for the hyperoctahedral group $B_n$, and for the wreath product $\mathbb{Z}_{m}\wr S_{n}$. Adin and Roichman proved that for every element of $S_n$ presented in the standard $OGS$ canonical form, the sum of the exponents of the $OGS$ equals the major-index of the permutation. Moreover, by using an $OGS$ canonical form, Adin and Roichman generalized the theorem of MacMahon \cite{Mac} to the $B$-type Coxeter group, and to the wreath product $\mathbb{Z}_{m}\wr S_{n}$. A few years later, that $OGS$ canonical form was generalized for complex reflection groups by Shwartz, Adin and Roichman \cite{SAR}. Recently, Shwartz \cite{Sogs} significantly extended the results of \cite{AR}, \cite{SAR}, where the $OGS$ of $S_{n}$ is strongly connected to the Coxeter length and to the descent set of the elements. Moreover, in \cite{Sogs}, there are described the exchange laws for the $OGS$ canonical forms of the symmetric group $S_n$, which have very interesting and surprising properties.
In this paper we try to generalize the results of \cite{Sogs} to the alternating subgroup $Alt_{n}$ of the symmetric group $S_{n}$. Since $Alt_{n}$ is a simple group, and the only normal subgroup of the symmetric group $S_{n}$ for $n\geq 5$, studying the generalization of the $OGS$ of $S_{n}$ to its subgroup $Alt_{n}$, helps us to understand connections between the property of a group and its subgroup, and helps us to study some unique properties of the family of the simple groups $Alt_{n}$ as well.
Therefore, first we recall the notations of permutations which we use in this paper, the $OGS$ of $S_{n}$ and the corresponding exchange laws, from \cite{Sogs}.

\begin{definition}\label{sn}
Let $S_n$ be the symmetric group on $n$ elements, then :
\begin{itemize}
\item The symmetric group $S_n$ is an $n-1$ generated simply-laced Coxeter group which has the presentation of: $$\langle s_1, s_2, \ldots, s_{n-1} | s_i^{2}=1, ~~ (s_i\cdot s_{i+1})^{3}=1, ~~(s_i\cdot s_j)^2=1 ~~for ~~|i-j|\geq 2\rangle;$$
\item The group $S_n$ can be considered as the permutation group on $n$ elements. A permutation $\pi\in S_n$ is denoted by $[\pi(1);\pi(2);\ldots;\pi(n)]$ (i.e., $\pi=[2;4;1;3]$ is a permutation in $S_{4}$ which satisfies $\pi(1)=2$, $\pi(2)=4$, $\pi(3)=1$, and $\pi(4)=3$);
\item Every permutation $\pi\in S_n$ can be presented in a cyclic notation, as a product of disjoint cycles of the form $(i_1, ~i_2, ~\ldots, ~i_m)$, which means $\pi(i_{k})=i_{k+1}$, for $1\leq k\leq m-1$, and $\pi(i_{m})=i_{1}$
    (i.e., The cyclic notation of $\pi=[3;4;1;5;2]$ in $S_5$, is $(1, ~3)(2, ~4, ~5)$);
\item The Coxeter generator $s_i$ can be considered the permutation which exchanges the element $i$ with the element $i+1$, i.e., the transposition $(i, i+1)$;
\item We consider multiplication of permutations in left to right order; i.e., for every $\pi_1, \pi_2\in S_n$, $\pi_1\cdot \pi_2 (i)=\pi_2(j)$, where, $\pi_1(i)=j$ (in contrary to the notation in \cite{AR} where Adin, Roichman, and other people have considered right to left multiplication of permutations);
\end{itemize}
\end{definition}

\begin{theorem}\label{canonical-sn}
Let $S_n$ be the symmetric group on $n$ elements. For every $2\leq m\leq n$, define $t_{m}$ to be the product $\prod_{j=1}^{m-1}s_{j}$. The element $t_{m}$ is the permutation $[m;1;\ldots;m-1]$, which is the $m$-cycle $(m, ~m-1, ~\ldots, ~1)$ in the cyclic notation of the permutation. Then, the elements $t_{n}, t_{n-1}, \ldots, t_{2}$ generates $S_n$, and every element of $S_n$ has a unique presentation in the following $OGS$ canonical form:

$$t_{2}^{i_{2}}\cdot t_{3}^{i_{3}}\cdots t_{n}^{i_{n}},~~~ where ~~~0\leq i_{k}<k ~~~for ~~~2\leq k\leq n$$
\end{theorem}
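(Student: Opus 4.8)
The plan is to establish existence and uniqueness of the canonical form separately, combining an induction on $n$ with a counting argument. Everything rests on one explicit computation: the action of $t_n$ and its powers on the top point. From the one-line form $t_m=[m;1;\ldots;m-1]$ one reads off that $t_m$ fixes every point $m+1,\ldots,n$ and acts on $\{1,\ldots,m\}$ as the $m$-cycle $(m,m-1,\ldots,1)$; in particular $t_m$ has order $m$, and iterating gives $t_n^{\,j}(n)=n-j$ for $0\le j\le n-1$. Thus, as $j$ ranges over $\{0,1,\ldots,n-1\}$, the value $t_n^{\,j}(n)$ ranges bijectively over $\{1,2,\ldots,n\}$. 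Because of the left-to-right multiplication convention fixed in Definition~\ref{sn}, this is exactly the place where a direction error is most likely, so I would verify it carefully.

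For existence and generation I would induct on $n$, the base cases $n\le 2$ being immediate (with $S_2=\{e,t_2\}$). Given $\pi\in S_n$, set $i_n:=n-\pi(n)$, which lies in $\{0,1,\ldots,n-1\}$ since $1\le\pi(n)\le n$, and put $\sigma:=\pi\cdot t_n^{-i_n}$. Using the convention $(\pi\cdot t_n^{-i_n})(n)=t_n^{-i_n}(\pi(n))$ together with $t_n^{\,i_n}(n)=n-i_n=\pi(n)$, one gets $\sigma(n)=n$, so $\sigma$ fixes $n$ and therefore lies in the subgroup $\langle t_2,\ldots,t_{n-1}\rangle$, which by the induction hypothesis is the full symmetric group on $\{1,\ldots,n-1\}$. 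That hypothesis gives $\sigma=t_2^{\,i_2}\cdots t_{n-1}^{\,i_{n-1}}$ with $0\le i_k<k$, whence $\pi=t_2^{\,i_2}\cdots t_{n-1}^{\,i_{n-1}}\cdot t_n^{\,i_n}$ with every exponent in the required range. This simultaneously shows that $t_2,\ldots,t_n$ generate $S_n$.

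For uniqueness I would invoke counting: the number of admissible exponent vectors $(i_2,\ldots,i_n)$ with $0\le i_k<k$ equals $\prod_{k=2}^{n}k=n!=|S_n|$. Since existence shows the map sending an admissible vector to $t_2^{\,i_2}\cdots t_n^{\,i_n}$ is onto $S_n$, and both sets are finite of the same cardinality $n!$, this map is a bijection, so the presentation is unique. Alternatively, uniqueness follows directly by induction: if two canonical words agree, evaluating both at the point $n$ and using $t_k(n)=n$ for $k<n$ forces $n-i_n=n-i'_n$, i.e. $i_n=i'_n$; cancelling $t_n^{\,i_n}$ then reduces the problem to $S_{n-1}$.

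The main obstacle here is bookkeeping rather than conceptual difficulty: one must pin down the action of $t_n^{\,i_n}$ on the point $n$ under the left-to-right convention and confirm that the natural choice $i_n=n-\pi(n)$ lands in $[0,n-1]$. This is precisely what makes the index bound $0\le i_k<k$ tight and forces the count of canonical words to equal exactly $n!$.
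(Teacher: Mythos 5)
Your proof is correct: the computation $t_n^{\,j}(n)=n-j$ under the left-to-right convention is right, the choice $i_n=n-\pi(n)$ does land in $\{0,\ldots,n-1\}$ and makes $\pi\cdot t_n^{-i_n}$ fix $n$, and either of your uniqueness arguments (counting, or stripping off $i_n$ by evaluating at $n$) closes the induction. The paper itself states this theorem without proof, citing \cite{Sogs}, but your argument --- induction on $n$ via the right cosets of the stabilizer of the point $n$, with the coset determined by $\pi(n)$ --- is exactly the technique the paper uses to prove the analogous Theorem \ref{main} for $Alt_n$, so it matches the paper's approach in all essentials.
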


\begin{proposition}\label{exchange}
The following holds:
\\

For transforming  the element $t_{q}^{i_{q}}\cdot t_{p}^{i_{p}}$  ($p<q$) onto the $OGS$ canonical form\\ $t_{2}^{i_{2}}\cdot t_{3}^{i_{3}}\cdots t_{n}^{i_{n}}$, i.e., according to the  standard $OGS$, one needs to use the following exchange laws:

 \[ t_{q}^{i_{q}}\cdot t_{p}^{i_{p}}=\begin{cases}
t_{i_{q}+i_{p}}^{i_q}\cdot t_{p+i_{q}}^{i_{p}}\cdot t_{q}^{i_{q}}  & q-i_{q}\geq p \\
\\
t_{i_{q}}^{p+i_{q}-q}\cdot t_{i_{q}+i_{p}}^{q-p}\cdot t_{q}^{i_{q}+i_{p}} & i_{p}\leq q-i_{q}\leq p \\
\\
t_{p+i_{q}-q}^{i_{q}+i_{p}-q}\cdot t_{i_{q}}^{p-i_{p}}\cdot t_{q}^{i_{q}+i_{p}-p}  & q-i_{q}\leq i_{p}.
\end{cases}
\]
\end{proposition}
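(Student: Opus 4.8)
The plan is to prove the three identities by explicit computation in the permutation (window) notation of Definition~\ref{sn}, and then to appeal to the uniqueness part of Theorem~\ref{canonical-sn} to conclude that the right-hand sides are genuinely the $OGS$ canonical forms. The starting point is the one-line description of a single power of $t_m$: since $t_m=(m,m-1,\ldots,1)$ acts on $\{1,\ldots,m\}$ as the cyclic shift $x\mapsto x-1 \pmod m$ and fixes everything larger than $m$, for $1\le i<m$ the power $t_m^{i}$ is the rotation
\[
t_m^{i}=[\,m-i+1;\,m-i+2;\,\ldots;\,m;\,1;\,2;\,\ldots;\,m-i\,],
\]
i.e. $t_m^{i}(x)=x-i$ when $x>i$ and $t_m^{i}(x)=x-i+m$ when $x\le i$. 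Using the left-to-right convention $(\pi_1\cdot\pi_2)(x)=\pi_2(\pi_1(x))$, the whole proof reduces to composing such rotations and reading off the resulting one-line notation.

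First I would compute the left-hand side $\sigma=t_q^{i_q}\cdot t_p^{i_p}$ directly, evaluating $\sigma(k)=t_p^{i_p}\bigl(t_q^{i_q}(k)\bigr)$ for $k\in\{1,\ldots,q\}$ (everything above $q$ is fixed). The intermediate value $j=t_q^{i_q}(k)$ sweeps out $\{1,\ldots,q\}$, split by $t_q^{i_q}$ into the lower block $\{1,\ldots,q-i_q\}$ (coming from $k>i_q$) and the upper block $\{q-i_q+1,\ldots,q\}$ (coming from $k\le i_q$). The second factor $t_p^{i_p}$ only moves those $j\le p$, and among these it separates $j\le i_p$ from $j>i_p$. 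Consequently the decisive quantity is the threshold $q-i_q$ compared with $p$ and with $i_p$, and this is exactly what produces the three regimes $q-i_q\ge p$, $\;i_p\le q-i_q\le p$, and $\;q-i_q\le i_p$. In each regime $\sigma$ is a permutation of $\{1,\ldots,q\}$ consisting of a few consecutive increasing runs (shifted intervals), whose one-line notation I would write out explicitly; for instance when $q-i_q\ge p$ the positions break into the four blocks of lengths $i_q,\,i_p,\,p-i_p,\,q-i_q-p$ carrying the value-intervals $\{q-i_q+1,\ldots,q\}$, $\{p-i_p+1,\ldots,p\}$, $\{1,\ldots,p-i_p\}$, $\{p+1,\ldots,q-i_q\}$.

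Next, for each case I would multiply out the proposed right-hand side the same way, composing the two or three rotations $t_{\bullet}^{\bullet}$ left to right and checking that the resulting one-line notation coincides block-for-block with the $\sigma$ computed above; a short check confirms, for example, that $t_{i_q+i_p}^{i_q}\cdot t_{p+i_q}^{i_p}\cdot t_q^{i_q}$ reproduces exactly the four-block permutation of the first case. Finally I would verify that every exponent in the right-hand side lies in the admissible range $0\le i_{k}<k$ and that the subscripts are strictly increasing, so that the expression is a legitimate $OGS$ canonical form; the uniqueness in Theorem~\ref{canonical-sn} then guarantees that this valid canonical expression really is the $OGS$ canonical form of the product, which is the content of the exchange law.

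The part I expect to be genuinely delicate is not the idea but the bookkeeping: keeping the inequalities that define each regime consistent with the index arithmetic, and handling the boundary configurations in which a computed subscript equals $q$ or two subscripts collide (e.g. $p+i_q=q$ on the boundary $q-i_q=p$). At such boundaries two adjacent factors must be merged, after which one has to re-confirm that the merged exponent still satisfies the $0\le i_{k}<k$ constraint before the expression counts as canonical. Carrying out the three case computations while tracking these boundary mergers is where the real work lies.
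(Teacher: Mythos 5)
Your proposal is mathematically sound, but note that the paper itself offers no proof of Proposition~\ref{exchange} to compare against: the statement is simply recalled from the reference [Sogs], so any verification here is necessarily your own. Your strategy --- write $t_m^{i}$ as the rotation $x\mapsto x-i \pmod m$ on $\{1,\ldots,m\}$, compose left to right, and match the resulting block structure of the one-line notation on both sides --- is the natural one and it does work. I checked all three regimes: for instance, in the first case the left-hand side decomposes positions $\{1,\ldots,q\}$ into blocks of lengths $i_q,\ i_p,\ p-i_p,\ q-i_q-p$ carrying the value intervals $\{q-i_q+1,\ldots,q\}$, $\{p-i_p+1,\ldots,p\}$, $\{1,\ldots,p-i_p\}$, $\{p+1,\ldots,q-i_q\}$, exactly as you describe, and the product $t_{i_q+i_p}^{i_q}\cdot t_{p+i_q}^{i_p}\cdot t_q^{i_q}$ reproduces it block for block; the analogous bookkeeping with the thresholds $d=p+i_q-q$ and $e=i_q+i_p-q$ closes the second and third cases. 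The boundary issues you flag are real but benign: the three regimes overlap at $q-i_q=p$ and $q-i_q=i_p$, where the corresponding formulas agree after merging coincident subscripts (e.g.\ $t_{i_q+i_p}^{i_q}\cdot t_q^{i_p}\cdot t_q^{i_q}=t_{i_q+i_p}^{i_q}\cdot t_q^{i_q+i_p}$), and the identities hold as stated even in degenerate configurations such as $t_m^{m}=1$. One small caution: the appeal to uniqueness in Theorem~\ref{canonical-sn} is only needed to assert that the right-hand side \emph{is} the canonical form; the identity itself is already established by the direct computation, and in the degenerate configurations the right-hand side is an identity of group elements rather than a literal canonical word, so you should phrase that final step accordingly.
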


\begin{definition}\label{alt-n}
Let $Alt_{n}$ be the subgroup of $S_{n}$ which contains all the elements which can be written as a product of even number of transpositions $s_i$.
\end{definition}

Obviously, $Alt_{n}$ is a subgroup of index $2$ of the symmetric group $S_{n}$, with $|Alt_{n}|=\frac{n!}{2}$.

\section{The $OGS$ of $Alt_{n}$}

In this section we find an $OGS$ for $Alt_{n}$,  which can be considered as a generalization of the $OGS$ of $S_{n}$ as defined in Theorem \ref{canonical-sn} in \cite{Sogs}.

\begin{proposition}\label{t-a-n}
Consider $t_{m}$ as defined in Theorem \ref{canonical-sn}, for $2\leq m\leq n$. Then, $t_{m}\in Alt_{n}$ if and only if $m$ is odd.
\end{proposition}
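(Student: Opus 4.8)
The plan is to read off the parity of $t_{m}$ directly from its given factorization. By Theorem \ref{canonical-sn}, the element $t_{m}$ is defined as $t_{m}=\prod_{j=1}^{m-1}s_{j}=s_{1}\cdot s_{2}\cdots s_{m-1}$, so it is already exhibited as a product of exactly $m-1$ of the Coxeter generators. Since each $s_{j}$ is the transposition $(j,\,j+1)$, this expresses $t_{m}$ as a product of $m-1$ transpositions. By Definition \ref{alt-n}, membership in $Alt_{n}$ is precisely the condition that an element can be written as a product of an even number of the transpositions $s_{i}$, so the claim reduces to determining the parity of $m-1$.

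The one point that requires care is that an element of $S_{n}$ admits many different factorizations into transpositions, so I must invoke the fact that the \emph{parity} of the number of transpositions is an invariant of the element and does not depend on the chosen factorization. I would phrase this via the sign homomorphism $\mathrm{sgn}\colon S_{n}\to\{\pm 1\}$, which sends every transposition to $-1$ and whose kernel is exactly $Alt_{n}$. Applying $\mathrm{sgn}$ to the factorization above gives $\mathrm{sgn}(t_{m})=\prod_{j=1}^{m-1}\mathrm{sgn}(s_{j})=(-1)^{m-1}$, and this is the well-definedness I am relying on.

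It then follows that $t_{m}\in Alt_{n}$ if and only if $\mathrm{sgn}(t_{m})=(-1)^{m-1}=1$, i.e.\ if and only if $m-1$ is even, which is equivalent to $m$ being odd. As a consistency check I would note the same conclusion from the cyclic description in Theorem \ref{canonical-sn}: $t_{m}$ is the single $m$-cycle $(m,\,m-1,\,\ldots,\,1)$, and an $m$-cycle is an even permutation precisely when $m$ is odd, since it factors into $m-1$ transpositions.

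There is no serious obstacle here; the only substantive ingredient is the well-definedness of permutation parity (equivalently, the existence of the sign homomorphism), which is standard. The rest is the arithmetic observation that $m-1$ is even exactly when $m$ is odd.
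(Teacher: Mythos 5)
Your proof is correct and takes essentially the same approach as the paper: both read off that $t_{m}=\prod_{j=1}^{m-1}s_{j}$ is a product of $m-1$ transpositions and conclude membership in $Alt_{n}$ exactly when $m-1$ is even. Your explicit appeal to the well-definedness of parity via the sign homomorphism is a small (and welcome) addition of rigor that the paper leaves implicit.
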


\begin{proof}
Since $t_{m}=\prod_{j=1}^{m-1}s_{j}$, we have $t_{m}$ is a product of $m-1$ transpositions. Hence, by Definition \ref{alt-n}, $t_{m}\in Alt_{n}$ if and only if $m$ is odd.
\end{proof}

\begin{definition}\label{uv}
For every integer $r$ such that $2\leq r\leq \frac{n}{2}$, let define $u_{2r}$ and $v_{2r}$ as follows:
\begin{itemize}
\item $u_{2r}=t_{2r-2}\cdot s_{2r-1}=(2r-2, 2r-1, \ldots, 1)(2r-1, 2r)$;
\item $v_{2r}=t_{2r}^{2}=(2r, 2r-2, \ldots, 2)(2r-1, 2r-3, \ldots, 1)$.
\end{itemize}
\end{definition}

\begin{remark}\label{order-uv}
By Definition \ref{uv}, $u_{2r}$ and $v_{2r}$ in $Alt_{n}$ satisfy the following properties:
$$|u_{2r}|=2r-2, ~~~~~~~ |v_{2r}|=r, ~~~~~~~v_{2r-2}=u_{2r}^{2}.$$
\end{remark}

\begin{definition}\label{n-fixed}
Let $Alt_{n-1}^{\circ}$ be the subgroup of $Alt_{n}$ which contains all the even permutations such that $n$ is a fixed point.
\end{definition}

\begin{remark}\label{index-n-fixed}
Obviously, $Alt_{n-1}^{\circ}$ is isomorphic to the alternating group $Alt_{n-1}$, and the index of $Alt_{n-1}^{\circ}$ is $n$ in $Alt_{n}$.
\end{remark}

\begin{theorem}\label{main}
Let $Alt_{n}$ be the alternating group on $n$ elements for $n\geq 3$.
\begin{enumerate}
\item Let $\ell$ be an integer greater or equal to $1$ and let $n=2\ell+1$, then the elements:
$$t_{3}, u_{4}, v_{4}, \ldots, t_{2\ell-1}, u_{2\ell}, v_{2\ell}, t_{2\ell+1}$$
form an $OGS$ for $Alt_{n}$,
such that every element $g\in Alt_{n}$ has a unique presentation of the form:
$$g=t_{3}^{i_{3}}\cdot \prod_{r=2}^{\ell}u_{2r}^{j_{2r}}\cdot v_{2r}^{k_{2r}}\cdot t_{2r+1}^{i_{2r+1}},$$
where

$$0\leq i_{3}<3, ~~~~~~~ 0\leq i_{2r+1}<2r+1, ~~~~~~~~ 0\leq j_{2r}<2, ~~~~~~~~ 0\leq k_{2r}<r,$$
for $2\leq r\leq \ell$;

\item Let $\ell$ be an integer greater or equal to $2$ and let $n=2\ell$. Then the elements:
$$t_{3}, u_{4}, v_{4}, \ldots, t_{2\ell-1}, u_{2\ell}, v_{2\ell}$$
form an $OGS$ for $Alt_{n}$,
such that every element $g\in Alt_{n}$ has a unique presentation of the form:
$$g=\prod_{r=2}^{\ell}t_{2r-1}^{i_{2r-1}}\cdot u_{2r}^{j_{2r}}\cdot v_{2r}^{k_{2r}},$$
where
$$0\leq i_{2r-1}<2r-1, ~~~~~~~~ 0\leq j_{2r}<2, ~~~~~~~~ 0\leq k_{2r}<r,$$
for $2\leq r\leq \ell$.
\end{enumerate}
\end{theorem}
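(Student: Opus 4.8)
The plan is to prove both parts simultaneously by induction on $\ell$, by realizing each ``block'' of generators as a complete set of right-coset representatives for a point stabilizer; existence and uniqueness of the presentation then come out together, and the count $|Alt_n|=n!/2$ serves as a consistency check. That the listed elements lie in $Alt_n$ is immediate ($t_{\mathrm{odd}}\in Alt_n$ by Proposition \ref{t-a-n}, $v_{2r}=t_{2r}^{2}$ is a square, and $u_{2r}=t_{2r-2}s_{2r-1}$ is a $(2r-2)$-cycle times a transposition, hence even). Throughout I use the paper's left-to-right convention $i^{\pi_1\pi_2}=(i^{\pi_1})^{\pi_2}$, and I write $Alt_{m}$ for the alternating group acting on $\{1,\dots,m\}$ (fixing the larger points), so that $Alt_{m-1}$ is the stabilizer of the point $m$ inside $Alt_{m}$, of index $m$. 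For the counts, in the odd case $n=2\ell+1$ the number of words is $3\cdot\prod_{r=2}^{\ell}2r(2r+1)=3\cdot\frac{(2\ell+1)!}{3!}=\frac{n!}{2}$, and in the even case $n=2\ell$ it is $\prod_{r=2}^{\ell}(2r-1)\cdot 2r=\frac{(2\ell)!}{2!}=\frac{n!}{2}$, both by telescoping.

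The computational heart is the claim that for $2\le r\le \ell$ the $2r$ elements $u_{2r}^{j}v_{2r}^{k}$ with $0\le j<2$, $0\le k<r$ form a transversal of $Alt_{2r-1}=\mathrm{Stab}(2r)$ in $Alt_{2r}$. I would prove this by tracking the image of the point $2r$. From $v_{2r}=(2r,2r-2,\dots,2)(2r-1,2r-3,\dots,1)$ one gets $(2r)^{v_{2r}^{k}}=2r-2k$, so for $j=0$ the images run over the even values $\{2,4,\dots,2r\}$. For $j=1$, since $u_{2r}=t_{2r-2}s_{2r-1}$ fixes $2r$ under $t_{2r-2}$ and then sends it to $2r-1$ under $s_{2r-1}$, we have $(2r)^{u_{2r}}=2r-1$, and applying $v_{2r}^{k}$ gives $(2r-1)^{v_{2r}^{k}}=2r-1-2k$, so the images run over the odd values $\{1,3,\dots,2r-1\}$. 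Hence $(j,k)\mapsto(2r)^{u_{2r}^{j}v_{2r}^{k}}$ is a bijection onto $\{1,\dots,2r\}$. Because the image of $2r$ is constant on each right coset of $\mathrm{Stab}(2r)$ and there are exactly $2r=[Alt_{2r}:\mathrm{Stab}(2r)]$ such cosets, distinct images force distinct cosets, giving the claim.

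Next I would record the elementary transversals from the cycles: since $(m)^{t_{m}^{i}}=m-i$ for $0\le i<m$, the powers $t_{2r+1}^{i}$ form a transversal of $Alt_{2r}$ in $Alt_{2r+1}$, and the powers $t_{2r-1}^{i}$ form a transversal of $Alt_{2r-2}$ in $Alt_{2r-1}$. Combining these with the claim through the standard fact that a transversal of $K$ in $G$ composed with a transversal of $H$ in $K$ (inner representative on the left, outer on the right) yields a transversal of $H$ in $G$, I obtain in the odd case that $\{u_{2r}^{j}v_{2r}^{k}t_{2r+1}^{i}\}$ is a transversal of $Alt_{2r-1}$ in $Alt_{2r+1}$, and in the even case that $\{t_{2r-1}^{i}u_{2r}^{j}v_{2r}^{k}\}$ is a transversal of $Alt_{2r-2}$ in $Alt_{2r}$. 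In each case the factor ordering matches exactly the one prescribed in the theorem.

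Finally I would run the induction, reading off the last block by peeling factors from the outside in. In the odd case $(2\ell+1)^{g}$ determines $i_{2\ell+1}$ (the other factors fix $2\ell+1$); after removing $t_{2\ell+1}^{i_{2\ell+1}}$ the element $g'$ fixes $2\ell+1$, and $(2\ell)^{g'}$ determines $(j_{2\ell},k_{2\ell})$. In the even case $(2\ell)^{g}$ determines $(j_{2\ell},k_{2\ell})$ directly (the factors to the left of $u_{2\ell}^{j}v_{2\ell}^{k}$ fix $2\ell$), and after removing $u_{2\ell}^{j_{2\ell}}v_{2\ell}^{k_{2\ell}}$ the image of $2\ell-1$ determines $i_{2\ell-1}$. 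Peeling off the whole last block leaves an element of $Alt_{n-2}$, to which the inductive hypothesis applies; the base cases $Alt_{3}=\{e,t_{3},t_{3}^{2}\}$ and $Alt_{4}=\{t_{3}^{i}u_{4}^{j}v_{4}^{k}\}$ are checked directly. The main obstacle is the transversal claim of the second paragraph: it is the only step where the specific cycle structure of $u_{2r}$ and $v_{2r}$ must be combined with the left-to-right convention, so it has to be computed carefully, whereas the counting, the $t$-power transversals, and the inductive bookkeeping are routine.
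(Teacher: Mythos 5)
Your proposal is correct and follows essentially the same route as the paper: an induction that decomposes $Alt_{m}$ into right cosets of the stabilizer of the largest point $m$, with the cosets distinguished by the image of $m$ — powers of $t_{2r+1}$ giving the transversal in the odd step, and the products $u_{2r}^{j}v_{2r}^{k}$ (even images for $j=0$, odd images for $j=1$) giving it in the even step. Your explicit counting check and the two-point-at-a-time peeling are only cosmetic differences from the paper's one-point-at-a-time induction.
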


\begin{proof}
The proof is by induction on $n$. Since $Alt_{3}$ is cyclic of order $3$, the theorem holds obviously. Now, consider $n=4$. The group $Alt_{4}$ can be considered as the even permutation on the set $\{1, 2, 3, 4\}$. Since $|Alt_{4}|=12$, and its subgroup $Alt_{3}^{\circ}$ is cyclic of order $3$, which is generated by $t_{3}$, the index of $Alt_{3}^{\circ}$ in $Alt_{4}$ is $4$. Moreover, since $Alt_{3}^{\circ}$ is the subgroup of $Alt_{4}$, which contains all the even permutations on $\{1, 2, 3, 4\}$ such that $4$ is a fixed point, a right coset of it in $Alt_{4}$ is determined by the image of $4$ as a permutation.
Therefore, since $u_{4}=(1, 2)(3, 4)$ and $v_{4}=(1, 3)(2, 4)$, we conclude the following:
\begin{itemize}
\item The subgroup $Alt_{3}^{\circ}$ are all the even permutation $\pi$ on $\{1, 2, 3, 4\}$ such that $\pi(4)=4$;
\item The coset $Alt_{3}^{\circ}\cdot u_{4}$ are all the even permutation $\pi$ on $\{1, 2, 3, 4\}$ such that $\pi(4)=3$;
\item The coset $Alt_{3}^{\circ}\cdot v_{4}$ are all the even permutation $\pi$ on $\{1, 2, 3, 4\}$ such that $\pi(4)=2$;
\item The coset $Alt_{3}^{\circ}\cdot u_{4}\cdot v_{4}$ are all the even permutation $\pi$ on $\{1, 2, 3, 4\}$ such that $\pi(4)=1$.
\end{itemize}
Hence, every element of $Alt_{4}$ has a unique presentation of a form:
$$t_{3}^{i_{3}}\cdot u_{4}^{j_{4}}\cdot v_{4}^{k_{4}},$$
where
$$0\leq i_{3}<3 ~~~~~~~ 0\leq j_{4}<2, ~~~~~~~ 0\leq k_{4}<2.$$

Now, assume by induction the theorem holds for $Alt_{m}$ for every $m\leq 2\ell$ for some $\ell\geq 2$, and we prove it for $Alt_{2\ell+1}$. By Definition \ref{n-fixed}, the index of the subgroup $Alt_{2\ell}^{\circ}$ is  $2\ell+1$ in $Alt_{2\ell+1}$ which contains all the even permutation $\pi$ on the set $\{1, 2, \ldots, 2\ell+1\}$ such that $\pi(2\ell+1)=2\ell+1$. Therefore, the $2\ell+1$ different right cosets of $Alt_{2\ell}^{\circ}$ on $Alt_{2\ell+1}$ are determined by the image of $2\ell+1$ as a permutation on the set $\{1, 2, \ldots, 2\ell+1\}$. Notice, $t_{2\ell+1}=(2\ell+1, 2\ell, \ldots, 1)$. Therefore, the following holds:
\begin{itemize}
\item The coset $Alt_{2\ell}^{\circ}\cdot t_{2\ell+1}^{i_{2\ell+1}}$ contains all the even permutations $\pi$, such that $$\pi(2\ell+1)=2\ell+1-i_{2\ell+1}.$$
\end{itemize}
Hence, every element of $Alt_{2\ell+1}$ has a unique presentation of the form $$g_{2\ell}\cdot t_{2\ell+1}^{i_{2\ell+1}},$$ such that $g_{2\ell}\in Alt_{2\ell}^{\circ}$ and $0\leq i_{2\ell+1}<2\ell+1$.

Now, we prove the theorem for $Alt_{2\ell+2}$. By Definition \ref{n-fixed}, the index of the subgroup $Alt_{2\ell+1}^{\circ}$ is  $2\ell+2$ in $Alt_{2\ell
+2}$ which contains all the even permutation $\pi$ on the set $\{1, 2, \ldots, 2\ell+2\}$ such that $\pi(2\ell+2)=2\ell+2$. Therefore, the $2\ell+2$ different right cosets of $Alt_{2\ell+1}^{\circ}$ on $Alt_{2\ell+2}$ are determined by the image of $2\ell+2$ as a permutation on the set $\{1, 2, \ldots, 2\ell+2\}$.
Since $u_{2\ell+2}=(2\ell, 2\ell-1, \ldots, 1)(2\ell+1, 2\ell+2)$ and $v_{2\ell+2}=(2\ell+2, 2\ell, \ldots, 2)(2\ell+1, 2\ell-1, \ldots, 1)$, the following holds:
\begin{itemize}
\item The coset $Alt_{2\ell+1}^{\circ}\cdot v_{2\ell+2}^{k_{2\ell+2}}$ contains all the even permutations $\pi$, such that $\pi(2\ell+2)$ is even between $0$ and $2\ell$, namely $$\pi(2\ell+2)=2\ell+2-2k_{2\ell+2};$$
\item The coset $Alt_{2\ell+1}^{\circ}\cdot u_{2\ell+2}\cdot v_{2\ell+2}^{k_{2\ell+2}}$ contains all the even permutations $\pi$, such that $\pi(2\ell+2)$ is odd between $1$ and $2\ell+1$, namely $$\pi(2\ell+2)=2\ell+1-2k_{2\ell+2}.$$
\end{itemize}
Hence, every element of $Alt_{2\ell+2}$ has a unique presentation of the form $$g_{2\ell+1}\cdot u_{2\ell+2}^{j_{2\ell+2}}\cdot v_{2\ell+2}^{k_{2\ell+2}},$$ such that $g_{2\ell+1}\in Alt_{2\ell+1}^{\circ}$, ~$0\leq j_{2\ell+1}<2$, ~$0\leq k_{2\ell+1}<\ell$.

Thus the theorem holds for every $n\geq 3$.
\end{proof}

\section{exchange laws for the $OGS$ of $Alt_{n}$}

In \cite{Sogs}, there are described very interesting exchange laws for the $OGS$ of the symmetric group $S_{n}$, which we mention at the first section in Proposition \ref{exchange}. Thus, it is interesting to see which exchange laws we get by the $OGS$ of $Alt_{n}$ which we have described in Theorem \ref{main}.
Since for $n=3$, the group $Alt_{3}$ is cyclic, the exchange laws in that case are trivial. Therefore, we start with $n=4$. By Theorem \ref{main}, every element in $Alt_{4}$ has a unique presentation of a form 
$$t_{3}^{i_{3}}\cdot u_{4}^{j_{4}}\cdot v_{4}^{k_{4}},$$
where 
$$t_{3}=(1, 3, 2), ~~~~~~~ u_{4}=(1, 2)(3, 4), ~~~~~~~ v_{4}=(1, 3)(2, 4),$$
and
$$i_{3}\in \{0, 1, 2\}, ~~~~~~~ j_{4}\in \{0, 1\}, ~~~~~~~ k_{4}\in \{0, 1\}.$$
Hence, by multiplication of permutations, the following exchange laws holds in $Alt_{4}$:
\begin{itemize}
\item $u_{4}\cdot t_{3}=t_{3}\cdot v_{4}$;
\item $u_{4}\cdot t_{3}^{2}=t_{3}^{2}\cdot u_{4}\cdot v_{4}$;
\item $v_{4}\cdot t_{3}=t_{3}\cdot u_{4}\cdot v_{4}$;
\item $v_{4}\cdot t_{3}^{2}=t_{3}^{2}\cdot u_{4}$;
\item $v_{4}\cdot u_{4}=u_{4}\cdot v_{4}$.
\end{itemize}

In the case of $n>4$, the exchange laws for the $OGS$ of $Alt_{n}$ which described in Theorem \ref{main} are generally much more complicated, but there are some interesting and surprising properties for some special cases of exchange laws, which we describe now.

\begin{proposition}\label{v-exchange}
Consider $Alt_{n}$ for $n\geq 4$. Let $r$ be an integer, such that $2\leq r\leq \frac{n}{2}$, and let $v_{2r}$ be the element of $Alt_{n}$ as defined in Definition \ref{uv}. Then the following exchange laws holds for all integers $p$ and $q$ such that $2\leq p<q\leq \frac{n}{2}$
\[ v_{2q}^{k_{2q}}\cdot v_{2p}^{k_{2p}}=\begin{cases}
v_{2k_{2q}+2k_{2p}}^{k_{2q}}\cdot v_{2p+2k_{2q}}^{k_{2p}}\cdot v_{2q}^{k_{2q}}  & q-k_{2q}\geq p \\
\\
v_{2k_{2q}}^{p+k_{2q}-q}\cdot v_{2k_{2q}+2k_{2p}}^{q-p}\cdot v_{2q}^{k_{2q}+k_{2p}} & k_{2p}\leq q-k_{2q}\leq p \\
\\
v_{2p+2k_{q}-2q}^{k_{2q}+k_{2p}-q}\cdot v_{2k_{2q}}^{p-k_{2p}}\cdot v_{2q}^{k_{2q}+k_{2p}-p}  & q-k_{2q}\leq k_{2p}.
\end{cases}
\]
\end{proposition}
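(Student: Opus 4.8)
The plan is to reduce the claimed identity to the already-established exchange law for the elements $t_m$ in the symmetric group (Proposition \ref{exchange}) by exhibiting a concrete isomorphism between the subgroup generated by the $v_{2r}$ and a symmetric group on half as many letters. The point is that the formula for $v$ is visibly the formula for $t$ with every subscript doubled and $t$ replaced by $v$, which strongly suggests that the relevant subgroup is an isomorphic ``doubled'' copy of a smaller symmetric group.

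First I would record the key structural fact behind Definition \ref{uv}. Since $v_{2r}=t_{2r}^{2}$ sends $k\mapsto k-2$ for $k\geq 3$, $2\mapsto 2r$ and $1\mapsto 2r-1$, it preserves the partition of $\{1,2,\ldots,2r\}$ into even and odd labels, and on each block it acts as an $r$-cycle. Writing $\alpha(2a)=a$ and $\beta(2a-1)=a$ for the two ``halving'' bijections onto $\{1,\ldots,r\}$, one checks directly that $\alpha\, v_{2j}\,\alpha^{-1}=t_j$ on the even block and $\beta\, v_{2j}\,\beta^{-1}=t_j$ on the odd block, for every $j$ with $2\leq j\leq \lfloor n/2\rfloor$ (with $v_{2j}$ fixing all labels exceeding $2j$). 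Thus each generator $v_{2j}$ acts as the same permutation $t_j$ on the even block and on the odd block after halving.

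Next I would define $\Phi\colon H\to S_{\lfloor n/2\rfloor}$ on $H=\langle v_4,v_6,\ldots\rangle$ by restricting to the (invariant) even block and halving, so that $\Phi(v_{2j})=t_j$. Because conjugation by the fixed bijection $\alpha$ is a homomorphism, $\Phi$ is a homomorphism, and the diagonal property from the previous step shows that any $h\in H$ acting trivially on the even block also acts trivially on the odd block, hence is the identity; this gives injectivity. Since the image contains $t_2,t_3,\ldots,t_{\lfloor n/2\rfloor}$, which generate $S_{\lfloor n/2\rfloor}$ by Theorem \ref{canonical-sn}, the map $\Phi$ is an isomorphism with $\Phi^{-1}(t_m)=v_{2m}$. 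As a consistency check, $\Phi$ matches orders, since $|v_{2j}|=j=|t_j|$ by Remark \ref{order-uv}.

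Finally I would apply $\Phi$ to the left-hand side, giving $\Phi(v_{2q}^{k_{2q}}\cdot v_{2p}^{k_{2p}})=t_q^{k_{2q}}\cdot t_p^{k_{2p}}$, expand this by the three cases of Proposition \ref{exchange} with $i_q=k_{2q}$ and $i_p=k_{2p}$, and then apply $\Phi^{-1}$, which doubles every subscript and replaces each $t$ by the corresponding $v$ while leaving exponents unchanged. This turns $t_{i_q+i_p}$ into $v_{2k_{2q}+2k_{2p}}$, $t_{p+i_q}$ into $v_{2p+2k_{2q}}$, and so on, reproducing exactly the three branches of the claimed formula, with the conditions $q-i_q\geq p$, $i_p\leq q-i_q\leq p$, $q-i_q\leq i_p$ becoming $q-k_{2q}\geq p$, $k_{2p}\leq q-k_{2q}\leq p$, $q-k_{2q}\leq k_{2p}$. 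No index bounds need to be rechecked, since every subscript on the right is the image under $\Phi^{-1}$ of a subscript occurring in the valid $S_{\lfloor n/2\rfloor}$-identity of Proposition \ref{exchange}, and so automatically lies in the admissible range. The one step requiring genuine care, and the main obstacle, is verifying the diagonal action and the consequent injectivity of $\Phi$; once that is in place the exchange law is inherited verbatim, with subscripts doubled, from the symmetric-group case.
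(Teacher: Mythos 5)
Your proof is correct, but it takes a genuinely different route from the paper's. The paper's entire argument is the observation that $v_{2r}=t_{2r}^{2}$ together with an appeal to carrying out ``the same'' computation as in the proof of Proposition \ref{exchange} in the cited reference; it does not actually transport the $t$-identity formally, it proposes to re-derive it. You instead prove a structural fact the paper never states: the halving bijections on the even and odd blocks conjugate each $v_{2j}$ to $t_{j}$, so $\langle v_{4},v_{6},\ldots,v_{2\lfloor n/2\rfloor}\rangle$ is a diagonally embedded copy of $S_{\lfloor n/2\rfloor}$ with $v_{2j}\mapsto t_{j}$ an isomorphism, and the $v$-exchange law is then the literal image of the $t$-exchange law under the inverse isomorphism. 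Your injectivity argument (an element trivial on the even block induces the trivial word in the $t_j$'s and hence is trivial on the odd block too) is sound, and the case-by-case translation of subscripts and exponents checks out against Proposition \ref{exchange}. What your approach buys is that it is self-contained given Proposition \ref{exchange} as a black box, and it explains \emph{why} the $v$-law is verbatim the $t$-law with doubled subscripts, rather than requiring the reader to re-run an external computation; it also yields the identification of the subgroup generated by the $v_{2r}$ as a symmetric group, which is of independent interest. Two cosmetic points: the third branch of the statement contains a typo ($k_{q}$ for $k_{2q}$), which you correctly read through, and you might note explicitly that the degenerate subscripts ($v_{2}=t_{2}^{2}=1$, matching $t_{1}=1$) transfer consistently under your isomorphism, which is the only place the ``no index bounds need rechecking'' remark does any work.
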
 

\begin{proof}
Since by Definition \ref{uv}, $v_{2}=t_{2r}^{2}$, we conclude the proof in the same way as the proof of Proposition \ref{exchange} in \cite{Sogs}.
\end{proof}

\begin{proposition}\label{rel-exchange-alt}
Consider the symmetric group $S_{n}$ and its alternating subgroup $Alt_{n}$ for $n\geq 4$. For every integer $q$ such that $2\leq q\leq n$, let $t_{q}=(q, q-1, \ldots, 1)=\prod_{i=1}^{q-1}s_{i}$, as defined in \cite{Sogs}. For every integer $r$ such that $2\leq r\leq \frac{n}{2}$ let $u_{2r}$ and $v_{2r}$ be the elements of $Alt_{n}$ as defined in Definition \ref{uv}, then the following relations holds in $S_{n}$
\begin{itemize}
\item $t_{2r}^{-1}\cdot t_{2r+2}=t_{2r+1}^{-1}\cdot u_{2r+2}$;
\item $t_{2r}\cdot t_{2r+2}=v_{2r}\cdot t_{2r+1}^{-1}\cdot u_{2r+2}$.
\end{itemize}
More generally, for every $r'>r$, the following holds
\begin{itemize}
\item $t_{2r}^{-1}\cdot t_{2r'}=\prod_{i=2r+1}^{2r'-1}t_{i}\cdot u_{i+1}$;
\item $t_{2r}\cdot t_{2r'}=v_{2r}\cdot \prod_{i=2r+1}^{2r'-1}t_{i}\cdot u_{i+1}$.
\end{itemize}
\end{proposition}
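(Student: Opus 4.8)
The plan is to reduce everything to two short word identities among the Coxeter generators and then propagate them by a telescoping argument. The starting observation is that the defining product $t_q=\prod_{j=1}^{q-1}s_j$ gives the one-step factorization $t_{q+1}=t_q\cdot s_q$, and hence $t_{2r+2}=t_{2r}\cdot s_{2r}\cdot s_{2r+1}$. Together with the definitions $u_{2r+2}=t_{2r}\cdot s_{2r+1}$ and $v_{2r}=t_{2r}^{2}$ (both from Definition \ref{uv}), these factorizations turn the proposition into a short manipulation of words in the $s_i$, so that no direct tracking of images of permutations is really needed.

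For the base case of the first relation I would compute both sides as words in $s_{2r},s_{2r+1}$. On the left,
$$t_{2r}^{-1}\cdot t_{2r+2}=t_{2r}^{-1}\cdot t_{2r}\cdot s_{2r}\cdot s_{2r+1}=s_{2r}\cdot s_{2r+1}.$$
On the right, since $t_{2r+1}=t_{2r}\cdot s_{2r}$ and inverting reverses the order under the left-to-right convention of Definition \ref{sn}, we have $t_{2r+1}^{-1}=s_{2r}\cdot t_{2r}^{-1}$, whence
$$t_{2r+1}^{-1}\cdot u_{2r+2}=s_{2r}\cdot t_{2r}^{-1}\cdot t_{2r}\cdot s_{2r+1}=s_{2r}\cdot s_{2r+1}.$$
The two sides agree (as a sanity check, both equal the $3$-cycle $(2r,\,2r+2,\,2r+1)$), which proves $t_{2r}^{-1}\cdot t_{2r+2}=t_{2r+1}^{-1}\cdot u_{2r+2}$. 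The base case of the second relation then follows by left-multiplying by $v_{2r}=t_{2r}^{2}$, using $t_{2r}\cdot t_{2r+2}=t_{2r}^{2}\cdot(t_{2r}^{-1}\cdot t_{2r+2})$.

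For the general relations I would telescope. Writing
$$t_{2r}^{-1}\cdot t_{2r'}=(t_{2r}^{-1}\cdot t_{2r+2})\cdot(t_{2r+2}^{-1}\cdot t_{2r+4})\cdots(t_{2r'-2}^{-1}\cdot t_{2r'}),$$
and substituting the base-case identity into each factor $t_{2s}^{-1}\cdot t_{2s+2}=t_{2s+1}^{-1}\cdot u_{2s+2}$ for $s=r,\ldots,r'-1$, yields
$$t_{2r}^{-1}\cdot t_{2r'}=\prod_{s=r}^{r'-1}\bigl(t_{2s+1}^{-1}\cdot u_{2s+2}\bigr),$$
which is exactly the claimed product once the index is written as $i=2s+1$ running over the odd values $2r+1,2r+3,\ldots,2r'-1$ (so that each factor is $t_i^{-1}\cdot u_{i+1}$). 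The general second relation is then immediate from this and $v_{2r}=t_{2r}^{2}$, via $t_{2r}\cdot t_{2r'}=v_{2r}\cdot(t_{2r}^{-1}\cdot t_{2r'})$.

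The only genuinely delicate point is the bookkeeping forced by the left-to-right multiplication convention: it is the identity $t_{2r+1}^{-1}=s_{2r}\cdot t_{2r}^{-1}$ (rather than $t_{2r}^{-1}\cdot s_{2r}$) that makes the cancellation in the base case work, and an order slip there would break the whole argument. I would also be explicit that the product in the statement is understood to run over the odd indices $i$ produced by the telescoping and to carry $t_i^{-1}$; checking this reading against the two displayed special cases (where $r'=r+1$ collapses the product to the single factor $t_{2r+1}^{-1}\cdot u_{2r+2}$) is the natural consistency test.
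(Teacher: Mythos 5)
Your proposal is correct and follows essentially the same route as the paper: establish the one-step identity $t_{2r}^{-1}\cdot t_{2r+2}=s_{2r}\cdot s_{2r+1}=t_{2r+1}^{-1}\cdot u_{2r+2}$ by cancellation of Coxeter words, obtain the second relation by left-multiplying by $v_{2r}=t_{2r}^{2}$, and telescope $t_{2r}^{-1}\cdot t_{2r'}=\prod_{i=r}^{r'-1}t_{2i}^{-1}\cdot t_{2i+2}$ for the general case. Your explicit remark that the product in the statement must be read over the odd indices $i$ and with $t_{i}^{-1}$ rather than $t_{i}$ is a useful clarification of a notational slip that the paper's own proof leaves implicit.
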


\begin{proof}
By using the definition of $t_{q}$ from \cite{Sogs}, and the definitions of $u_{2r}$ and $v_{2r}$ from Definition \ref{uv}, the following holds:
\begin{itemize}
\item $t_{2r}^{-1}\cdot t_{2r+2}=s_{2r}\cdot s_{2r+1}=\left(s_{2r}\cdot s_{2r-1}\cdots s_{1}\right)\cdot \left(s_{1}\cdot s_{2}\cdot s_{2r-1}\right)\cdot s_{2r+1}=t_{2r+1}^{-1}\cdot u_{2r+2}$;
\item $t_{2r}\cdot t_{2r+2}=t_{2r}^{2}\cdot t_{2r}^{-1}\cdot t_{2r+2}=v_{2r}\cdot t_{2r+1}^{-1}\cdot u_{2r+2}$;    
\end{itemize}
Then by using
\begin{itemize}
\item $t_{2r}^{-1}\cdot t_{2r'}=\prod_{i=r}^{r'-1}t_{2i}^{-1}\cdot t_{2i+2}$;
\item $t_{2r}\cdot t_{2r'}=v_{2r}\cdot \prod_{i=r}^{r'-1}t_{2i}^{-1}\cdot t_{2i+2}$,
\end{itemize}
we conclude the rest of the results the proposition.
\end{proof}

\begin{proposition}
The following exchange laws are satisfied in the alternating group $Alt_{n}$.
\begin{itemize}
\item $v_{2r}\cdot u_{2r}=\prod_{i=2}^{r-1}\left(u_{2i}\cdot t_{2i+1}^{-1}\right)\cdot u_{2r}\cdot v_{2r}$, for every $r\geq 3$;
\item $t_{2r'-1}\cdot t_{2r-1}=\prod_{i=2}^{r}\left(t_{2i-1}^{-1}\cdot u_{2i}\right)\cdot t_{2r'-1}$, for every $r'>r\geq 2$.
\end{itemize}
\end{proposition}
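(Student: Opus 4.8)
The plan is to prove each exchange law by reducing it, via the relations already established in Proposition \ref{rel-exchange-alt} and Proposition \ref{exchange}, to an identity between explicit permutations that can be checked by tracking the image of each point. The second law is the more transparent of the two, so I would treat it first.

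For the second law, the guiding observation is that the displayed product telescopes. Shifting the index $r\mapsto i-1$ in the first relation of Proposition \ref{rel-exchange-alt} gives $t_{2i-1}^{-1}\cdot u_{2i}=t_{2i-2}^{-1}\cdot t_{2i}$, so
\[
\prod_{i=2}^{r}\bigl(t_{2i-1}^{-1}\cdot u_{2i}\bigr)=\prod_{i=2}^{r}\bigl(t_{2i-2}^{-1}\cdot t_{2i}\bigr)=t_{2}^{-1}\cdot t_{2r}=t_{2}\cdot t_{2r},
\]
the last equality because $t_{2}=s_{1}$ is an involution. It then remains to show $t_{2r'-1}\cdot t_{2r-1}=t_{2}\cdot t_{2r}\cdot t_{2r'-1}$, and this follows directly from Proposition \ref{exchange} applied with $q=2r'-1$, $p=2r-1$ and $i_{q}=i_{p}=1$: since $r'>r$ forces $q-i_{q}=2r'-2\geq 2r>2r-1=p$, the first case of the exchange law applies and, with $i_q+i_p=2$ and $p+i_q=2r$, produces exactly $t_{2}\cdot t_{2r}\cdot t_{2r'-1}$. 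Combining the two computations yields the claim.

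For the first law, I would first rewrite it as a commutator identity. Since $v_{2r}\cdot u_{2r}=\bigl(v_{2r}\,u_{2r}\,v_{2r}^{-1}\,u_{2r}^{-1}\bigr)\cdot u_{2r}\cdot v_{2r}$, the statement is equivalent to
\[
v_{2r}\cdot u_{2r}\cdot v_{2r}^{-1}\cdot u_{2r}^{-1}=\prod_{i=2}^{r-1}\bigl(u_{2i}\cdot t_{2i+1}^{-1}\bigr).
\]
I would evaluate the right-hand side first: using the permutation descriptions of $u_{2i}$ and $t_{2i+1}$ from Definition \ref{uv} and Theorem \ref{canonical-sn}, a short point-chase shows that each factor is the $3$-cycle $u_{2i}\cdot t_{2i+1}^{-1}=(1,\,2i-1,\,2i+1)$, and these telescope, via $(1,3,2k-1)\cdot(1,2k-1,2k+1)=(1,3,2k+1)$, to the single $3$-cycle $(1,\,3,\,2r-1)$. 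It then suffices to show that the commutator on the left equals this same $3$-cycle.

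The hard part will be this last verification. I would compute $v_{2r}\,u_{2r}\,v_{2r}^{-1}\,u_{2r}^{-1}$ by following a general point through the four maps, using $v_{2r}\colon j\mapsto j-2$ (cyclically) together with the explicit action of $u_{2r}=t_{2r-2}\,s_{2r-1}$. The computation splits into cases according to the parity and size of the point, and the delicate part is keeping track of the boundary elements $1,2,2r-2,2r-1,2r$, where the cyclic wrap-around and the transposition $(2r-1,2r)$ interact. Carrying this out shows that every even point, and every odd point other than $1,3,2r-1$, is fixed, while $1\mapsto 3\mapsto 2r-1\mapsto 1$, so the commutator is indeed $(1,3,2r-1)$, matching the telescoped product and completing the proof. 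As a sanity check, the case $r=3$ recovers $v_{6}u_{6}=(1,3,5)\,u_{6}v_{6}$, and the excluded case $r=2$ degenerates to the commuting relation $v_{4}u_{4}=u_{4}v_{4}$ already recorded for $Alt_{4}$.
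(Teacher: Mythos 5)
Your proposal is correct. For the second identity your argument is essentially the paper's: both reduce $t_{2r'-1}\cdot t_{2r-1}$ to $t_{2}\cdot t_{2r}\cdot t_{2r'-1}$ via the first case of Proposition \ref{exchange} and then identify $t_{2}\cdot t_{2r}$ with $\prod_{i=2}^{r}\bigl(t_{2i-1}^{-1}\cdot u_{2i}\bigr)$; the only cosmetic difference is that you rederive that identification by telescoping the basic relation $t_{2i-1}^{-1}\cdot u_{2i}=t_{2i-2}^{-1}\cdot t_{2i}$, whereas the paper cites the ``more general'' form of Proposition \ref{rel-exchange-alt} directly. For the first identity your route genuinely differs. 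The paper computes the conjugate $v_{2r}\cdot u_{2r}\cdot v_{2r}^{-1}$ as a word in the Coxeter generators, namely $s_{1}\cdot\prod_{i=3}^{2r-1}s_{i}$, recognizes this as $u_{4}\cdot t_{4}^{-1}\cdot t_{2r}$, and then expands $t_{4}^{-1}\cdot t_{2r}$ by Proposition \ref{rel-exchange-alt}; it never names the correction factor as a single permutation. You instead identify both sides concretely: each factor $u_{2i}\cdot t_{2i+1}^{-1}$ is the $3$-cycle $(1,2i-1,2i+1)$ (this checks out under the paper's left-to-right convention), the product telescopes to $(1,3,2r-1)$, and a direct point-chase shows the commutator $v_{2r}\,u_{2r}\,v_{2r}^{-1}\,u_{2r}^{-1}$ equals the same $3$-cycle -- I verified this case analysis and it is right, including the boundary points. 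Your version buys a cleaner conceptual statement (the deviation from commutativity of $u_{2r}$ and $v_{2r}$ is exactly the $3$-cycle $(1,3,2r-1)$) and a self-contained check independent of Proposition \ref{rel-exchange-alt}; the paper's version stays entirely inside the word-and-relation calculus it is building, which is more in the spirit of producing exchange laws expressible in the $OGS$ generators. The one thing to tighten before submitting is the central point-chase, which you describe and assert rather than write out; since it is the crux of your argument for the first law, the case analysis over even points, interior odd points, and the boundary elements $1,2,2r-2,2r-1,2r$ should appear explicitly.
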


\begin{proof}
First, notice the following observations. 
By Definition \ref{uv}, $u_{2r}=t_{2r-2}\cdot s_{2r-1}$. Hence, by \cite{Sogs}, $u_{2r}=\prod_{i=1}^{2r-3}s_i\cdot s_{2r-1}$. By Definition \ref{uv}, \\ $v_{2r}=(2r, 2r-2, \ldots, 2)(2r-1, 2r-3, \ldots, 1)$. Therefore
$$v_{2r}\cdot u_{2r}\cdot v_{2r}^{-1}=s_{1}\cdot \prod_{i=3}^{2r-1}s_{i}=\left(s_{1}\cdot s_{3}\right)\cdot \prod_{i=4}^{2r-1}s_{i}.$$
Now, notice the following observations:
\begin{itemize}
\item By Definition \ref{uv}, $s_{1}\cdot s_{3}=u_{4}$;
\item since $\prod_{i=4}^{2r-1}s_{i}=\left(s_{3}\cdot s_{2}\cdot s_{1}\right)\cdot \prod_{i=1}^{2r-1}s_{i}$, we conclude by \cite{Sogs}, $\prod_{i=4}^{2r-1}s_{i}=t_{4}^{-1}\cdot t_{2r}$. Then by using Proposition \ref{rel-exchange-alt}, $t_{4}^{-1}\cdot t_{2r}=\prod_{i=3}^{r}t_{2i-1}^{-1}\cdot u_{2i}$.
\end{itemize}
Hence, 
$$v_{2r}\cdot u_{2r}=v_{2r}\cdot u_{2r}\cdot v_{2r}^{-1}\cdot v_{2r}=u_{4}\cdot \prod_{i=3}^{r}\left(t_{2i-1}^{-1}\cdot u_{2i}\right)\cdot v_{2r}=\prod_{i=2}^{r-1}\left(u_{2i}\cdot t_{2i+1}^{-1}\right)\cdot u_{2r}\cdot v_{2r},$$ for every $r\geq 3$

Now, we turn to the second part of the proposition. Consider the expression \\ $t_{2r'-1}\cdot t_{2r-1}$, where $r'>r\geq 2$. By Proposition \ref{exchange}, $t_{2r'-1}\cdot t_{2r-1}=t_{2}\cdot t_{2r}\cdot t_{2r'-1}$. Now, by using Proposition \ref{rel-exchange-alt}, ~$t_{2}\cdot t_{2r}=\prod_{i=2}^{r}t_{2i-1}\cdot u_{2i}$. Hence, 
$$t_{2r'-1}\cdot t_{2r-1}=\prod_{i=2}^{r}\left(t_{2i-1}^{-1}\cdot u_{2i}\right)\cdot t_{2r'-1},$$ for every $r'>r\geq 2$.
\end{proof}

\section{Conclusion and future plans}
The paper generalize the standard $OGS$ of the symmetric group $S_{n}$ as defined in \cite{Sogs} to its alternating subgroup $Alt_{n}$. In the paper we found some interesting properties of the exchange laws as well. Therefore, the results of the paper motivate us to find more connections between the $OGS$ and the presentation of the elements of the alternating group in a form of permutation, similarly to the results in the case of the symmetric groups in \cite{Sogs}. Other direction of further research is studying particular subgroups of the alternating group $Alt_{n}$ by using the properties of the $OGS$ which we described in this paper.

\end{document}